\newcommand\Def[1]{\textbf{\color{black}#1}}
\renewcommand\emptyset{\varnothing}
\newcommand\R{\mathbb{R}}
\newcommand\inner[1]{\langle {#1} \rangle}
\newcommand\defeq{\coloneqq}
\newcommand\1{\mathbf{1}}
\DeclareMathOperator{\conv}{conv}
\DeclareMathOperator{\vol}{vol}
\newtheorem{thm}{Theorem}[section] \newtheorem{cor}[thm]{Corollary}
\newtheorem{lem}[thm]{Lemma} \newtheorem{prop}[thm]{Proposition}
 \newtheorem{quest}{Question}
\theoremstyle{definition}
\title{Spectral Polyhedra}
\author{Raman Sanyal}
\address{Institut f\"ur Mathematik, Goethe-Universit\"at Frankfurt, Germany} 
\email{sanyal@math.uni-frankfurt.de}
\author{James Saunderson}
\address{Department of Electrical and Computer Systems Engineering, Monash University, VIC 3800, Australia}
\email{james.saunderson@monash.edu}
\keywords{spectral convex set, spectral polyhedron, spectrahedra,
spectrahedral shadow, hyperbolic polynomials, Steiner polynomials}
\subjclass[2010]{
52A05, %
90C22, %
52A41, %
52A39, %
52B12} %
\date{\today}
\begin{document}

\begin{abstract}
    A \emph{spectral convex set} is a collection of symmetric matrices whose
    range of eigenvalues form a symmetric convex set.  Spectral convex sets
    generalize the Schur-Horn orbitopes studied by Sanyal--Sottile--Sturmfels
    (2011). We study this class of convex bodies, which is 
    closed under intersections, polarity, and Minkowski sums. We describe
    orbits of faces and give a formula for their Steiner polynomials.  We then
    focus on spectral polyhedra. We prove that spectral polyhedra are
    spectrahedra and give small representations as spectrahedral shadows. We
    close with observations and questions regarding hyperbolicity cones, polar
    convex bodies, and spectral zonotopes.
\end{abstract}
\maketitle

\newcommand\SymGrp{\mathfrak{S}}%
\newcommand\Sym{\mathrm{S}_2}%
\newcommand\SymMat{\mathrm{S}_2\R^d}%
\newcommand\Spec[1]{\Lambda(#1)}%

\section{Introduction}\label{sec:intro}
The symmetric group $\SymGrp_d$ acts on $\R^d$ by permuting coordinates. We
call a convex set $K \subset \R^d$ \Def{symmetric} if $\sigma K = K$ for all
$\sigma \in \SymGrp_d$. We write $\SymMat$ for the
$\binom{d+1}{2}$-dimensional real vector space of
symmetric $d$-by-$d$ matrices. Every real symmetric matrix $A \in \SymMat$ has
$d$ real eigenvalues, which we denote by $\lambda(A) \in \R^d$.  In this note,
we are concerned with sets of the form
\begin{equation}\label{eqn:Def_LK}
    \Spec{K} \ \defeq \ \{ A \in \SymMat : \lambda(A) \in K \} \, ,
\end{equation}
which we call \Def{spectral convex sets}. 
\newcommand\diag{\delta}%
The name is justified by Corollary~\ref{cor:convex} which asserts that $\Spec{K}$ is indeed a convex
subset of $\SymMat$. 

\newcommand\SH{\mathcal{SH}}%
The simplest symmetric convex sets are of the form $\Pi(p) = \conv\{ \sigma p
: \sigma \in \SymGrp_d \}$ for $p \in \R^d$. Such a symmetric polytope is
called a \Def{permutahedron}~\cite{BilleraSarangarajan} and the associated spectral convex sets $\SH(p)
:= \Spec{\Pi(p)}$ were studied in~\cite{SSS11} under the name
\Def{Schur-Horn orbitopes}. The class of spectral convex sets is strictly
larger. For example, for $1\leq p \leq \infty$, the unit $p$-norm ball in $\R^d$ is a 
symmetric convex set. The associated spectral convex set 
is the unit \Def{Schatten $p$-norm ball} in $\SymMat$, consisting of $d\times d$ symmetric matrices with
eigenvalues having $p$-norm at most one. It follows that the spectral convex set 
associated with the cube in $\R^d$ is the spectral norm ball in $\SymMat$, the spectral 
convex set associated with the octahedron in $\R^d$ is the nuclear norm ball in $\SymMat$, and 
the spectral convex set associated with the Euclidean norm ball is the 
Frobenius norm ball. 

In Section~\ref{sec:spectral_bodies}, we summarize some basic, yet remarkable, geometric and algebraic
properties of spectral convex sets. In particular, we observe that spectral
convex sets are closed under intersections, Minkowski sums, and polarity.

A \Def{spectrahedron} is a convex set $S \subset \R^d$ of the form
\[
    S \ = \ \{ x \in \R^d : A_0 + x_1 A_1 + \cdots + x_d A_d \succeq 0 \} \, ,
\] 
where $A_0,A_1,\ldots,A_d$ are symmetric matrices and $\succeq$ denotes
positive semidefiniteness.  In Section~\ref{sec:spectrahedra}, we show that
\Def{spectral polyhedra}, that is, spectral convex bodies associated to
symmetric polyhedra, are spectrahedra (Theorem~\ref{thm:spectrahedron}),
generalizing the construction from~\cite{SSS11} for Schur-Horn orbitopes. It
follows that spectral polyhedra are basic semialgebraic, and are examples of
the very special class of \emph{doubly spectrahedral} convex sets, i.e.,
spectrahedra whose polars are also spectrahedra~\cite{SPW15}.  Spectral
polyhedral cones are hyperbolicity cones (see Section~\ref{sec:hyp} for
details). The generalized Lax conjecture asserts that every hyperbolicity cone
is spectrahedral. Theorem~\ref{thm:spectrahedron}, therefore, gives further
positive evidence for the generalized Lax conjecture.

If $P$ is a symmetric polyhedron with $M$ orbits of defining inequalities,
then the size of our spectrahedral representation of $\Spec{P}$ is $M \cdot
\prod_{i=1}^d \binom{d}{i}$. A lower bound on the size of a spectrahedral
representation is $M d{!}$, obtained by considering the degree of the
algebraic boundary.  While spectrahedral representations give insight into the
algebraic properties of spectral polyhedra, in order to solve convex
optimization problems involving spectral polyhedra, it suffices to give
representations as \Def{spectrahedral shadows}, i.e., linear projections of
spectrahedra.  In Section~\ref{sec:shadows}, we use a result of Ben-Tal and
Nemirovski~\cite{BenTal-Nemirovski} to give significantly smaller
representations of spectral polyhedra as spectrahedral shadows. 

We close in Section~\ref{sec:misc} with remarks, questions, and future
directions regarding hyperbolic polynomials and the generalized Lax
conjecture, generalizations to other Lie groups, and spectral zonotopes.

\textbf{Acknowledgements.} The first author thanks Oliver Goertsches, Leif
Nauendorf, Luke Oeding, Thomas Wannerer, and Anna-Laura Sattelberger for
insightful conversations. This project was initiated while the first author
was visiting the Mathematical Sciences Research Institute (MSRI) and the
second author was visiting the Simons Institute for the Theory of Computing.
We would like to thank the organizers of the programs \emph{Geometric and
Topological Combinatorics} and \emph{Bridging Continuous and Discrete
Optimization} for creating a stimulating atmosphere and encouraging
interaction.

\section{Spectral convex sets}
\label{sec:spectral_bodies}

\newcommand{\Diag}{D}
Denote by $\Diag : \SymMat \to \R^d$ the projection onto the diagonal and by
$\diag : \R^d \to \SymMat$ the embedding into diagonal matrices.  Many
remarkable properties of spectral convex sets arise because the projection
onto the diagonal, and the diagonal section, coincide.

\begin{lem}\label{lem:sec_project}
    If $K$ is a symmetric convex set, then
    \[
        \Diag(\Spec{K}) \ = \ K \ = \ \Diag( \Spec{K} \cap \diag(\R^d) ).
    \]
\end{lem}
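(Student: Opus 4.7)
The plan is to reduce everything to the classical Schur--Horn theorem, which says that for any $A \in \SymMat$ the diagonal $\Diag(A)$ lies in the permutahedron $\Pi(\lambda(A)) = \conv\{\sigma \lambda(A) : \sigma \in \SymGrp_d\}$, and conversely every point of $\Pi(\lambda(A))$ arises as the diagonal of some matrix with spectrum $\lambda(A)$.

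First I would dispense with the right-hand equality $K = \Diag(\Spec{K} \cap \diag(\R^d))$, which is essentially tautological. For $x \in \R^d$ the eigenvalues of $\diag(x)$ are exactly the coordinates of $x$ up to permutation; since $K$ is $\SymGrp_d$-invariant, this means $\diag(x) \in \Spec{K}$ if and only if $x \in K$. Hence $\Spec{K} \cap \diag(\R^d) = \diag(K)$, and applying $\Diag$ (which is a left inverse of $\diag$) recovers $K$.

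Next, the inclusion $\Diag(\Spec{K}) \supseteq K$ is automatic from the identity just proved, since $\Spec{K} \cap \diag(\R^d) \subseteq \Spec{K}$. The substantive inclusion is $\Diag(\Spec{K}) \subseteq K$. For this, given $A \in \Spec{K}$, Schur--Horn provides $\Diag(A) \in \Pi(\lambda(A))$. By definition of $\Spec{K}$ we have $\lambda(A) \in K$; since $K$ is symmetric and convex, the whole convex hull $\Pi(\lambda(A))$ of the $\SymGrp_d$-orbit of $\lambda(A)$ is contained in $K$, and in particular $\Diag(A) \in K$.

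The only nontrivial input is the Schur--Horn theorem, so there is no real obstacle beyond citing it; the rest is bookkeeping around the identification $\Diag \circ \diag = \mathrm{id}$ and the $\SymGrp_d$-invariance of $K$. This also explains the name: spectral convex sets are precisely those for which diagonal projection and diagonal section of the set of matrices with eigenvalues in $K$ coincide with $K$.
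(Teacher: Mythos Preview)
Your proof is correct and follows essentially the same route as the paper: the easy inclusions are unwound from $\diag(K)\subseteq\Spec{K}$, and the only substantive inclusion $\Diag(\Spec{K})\subseteq K$ is obtained from Schur's inequality $\Diag(A)\in\Pi(\lambda(A))$ together with $\Pi(\lambda(A))\subseteq K$ by symmetry and convexity. One small remark: you invoke the full Schur--Horn theorem, but only the Schur direction (majorization of the diagonal by the spectrum) is actually used here, as the paper's proof makes explicit.
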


\newcommand\major{\trianglelefteq}%

Before giving a proof, we introduce some notation and terminology. 
For a point $p \in \R^d$, we write $s_k(p)$ for the sum of its $k$ largest
coordinates. Recall that a point $q \in \R^d$ is \Def{majorized} by $p$, denoted $q \major p$, if
\begin{equation}
    \sum_{i=1}^d q_i \ = \ \sum_{i=1}^d p_i \qquad \text{ and } \qquad s_k(q)
    \ \le \ s_k(p) \quad \text{ for all } k=1,\dots,d-1 \, .
\end{equation}
Majorization relates to permutahedra in that
\[ 
    \Pi(p) \ = \  \{q\in \R^d\;:\; q \major p\}.
\]
In other words, the majorization inequalities give an inequality description
of the permutahedron~\cite{BilleraSarangarajan}.

\begin{proof}[{Proof of Lemma~\ref{lem:sec_project}}]
    Since $\Spec{K}$ contains $\diag(K)$, the obvious inclusions are
	$\Diag(\Spec{K}) \supseteq K$ and $K \subseteq \Diag(\Spec{K} \cap \diag(\R^d))$.

    For the remaining inclusions, we use Schur's insight (see, for
    example,~\cite[Thm.~4.3.45]{HornJohnson}) that
    for any $A \in \SymMat$, we have $\Diag(A) \major \lambda(A)$. Since
    $\Pi(p) \subseteq K$ for any $p \in K$, we infer that $\Diag(A) \in K$ for any
    $A \in \Spec{K}$.
\end{proof}

Lemma~\ref{lem:sec_project} yields that spectral convex sets are, in fact, convex. 

\begin{cor}\label{cor:convex}
    If $K$ is a symmetric convex set, then $\Spec{K}$ is convex.
\end{cor}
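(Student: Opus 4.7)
The plan is to pick two arbitrary matrices $A,B \in \Spec{K}$ and a scalar $t \in [0,1]$, set $C \defeq tA + (1-t)B$, and show that $\lambda(C) \in K$. The obstacle is that the eigenvalue map $\lambda$ is not linear in the matrix, so one cannot directly conclude $\lambda(C) = t\lambda(A) + (1-t)\lambda(B)$. The trick is to compute $\lambda(C)$ as a diagonal in a cleverly chosen basis, where linearity is available and Lemma~\ref{lem:sec_project} applies.

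Concretely, I would choose an orthogonal matrix $U$ diagonalizing $C$, so that $\lambda(C) = \Diag(U^\top C U)$. Conjugation by $U$ is linear on $\SymMat$ and commutes with $\Diag$, so
\[
    \lambda(C) \ = \ \Diag(U^\top C U) \ = \ t\,\Diag(U^\top A U) \,+\, (1-t)\,\Diag(U^\top B U).
\]
Now $U^\top A U$ and $U^\top B U$ have the same spectra as $A$ and $B$, respectively, so both still lie in $\Spec{K}$. By the inclusion $\Diag(\Spec{K}) \subseteq K$ established in Lemma~\ref{lem:sec_project} (the content of Schur's theorem combined with the symmetry of $K$), both $\Diag(U^\top A U)$ and $\Diag(U^\top B U)$ belong to $K$. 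Convexity of $K$ then gives $\lambda(C) \in K$, i.e., $C \in \Spec{K}$.

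The main obstacle, as noted, is the non-linearity of $\lambda$; overcoming it requires the observation that $\Spec{K}$ is invariant under orthogonal conjugation and that one is free to choose a basis adapted to the convex combination $C$ rather than to $A$ or $B$ individually. Once this basis is fixed, Lemma~\ref{lem:sec_project} does all the remaining work, so the proof should be just a couple of lines.
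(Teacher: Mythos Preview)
Your proof is correct and is essentially the same as the paper's: both diagonalize the convex combination (the paper phrases this as ``we can assume $A = \diag(p)$''), use linearity of the diagonal projection $\Diag$, and then invoke Lemma~\ref{lem:sec_project} together with convexity of $K$. The only cosmetic difference is that the paper treats an arbitrary finite convex combination $\sum_i \mu_i A_i$ rather than just two summands.
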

\begin{proof}
	It is enough to show that $\conv(\Spec{K}) \subseteq \Spec{K}$. 
	Assume that $A \in \conv( \Spec{K} )$. We can assume
    that $A = \diag(p)$ for some $p \in \R^d$. By
    definition there are $A_1,\dots,A_m \in \Spec{K}$ such that $\diag(p) =
    \sum_{i=1}^m \mu_i A_i$ with $\mu_i \ge 0$ and $\mu_1+\cdots+\mu_m = 1$.
    In particular, $p = \Diag(A) = \sum_i \mu_i \Diag(A_i)$ and 
    Lemma~\ref{lem:sec_project} yields $p \in K$. It follows that $A\in \Spec{K}$. 
\end{proof}

\newcommand\tr{\mathop{tr}}%
\newcommand{\support}[2]{h_{#1}(#2)}%
We identify the dual space $(\SymMat)^*$ with $\SymMat$ via the Frobenius
inner product $\inner{A,B} \defeq \tr( A B)$. The \Def{support function} of a
closed convex set $K$ is defined by 
\[
	\support{K}{c} \ \defeq \ \max \{ \inner{c,p} : p \in K \} \, .
\]

\begin{prop}\label{prop:support}
    If $K \subset \R^d$ is a symmetric closed convex set, then
	$ \support{\Spec{K}}{B} \ = \ \support{K}{\lambda(B)}$
    for all $B \in \SymMat$.
\end{prop}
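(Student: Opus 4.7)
The plan is to unwind the definition of the support function and reduce the statement to the classical von Neumann--Ky Fan trace inequality for symmetric matrices, using the $\SymGrp_d$-invariance of $K$ at the key steps. For $v \in \R^d$, let $v^{\downarrow}$ denote the rearrangement of $v$ in decreasing order; since $K$ is symmetric, $v \in K$ if and only if $v^{\downarrow} \in K$, and $\support{K}{v} = \support{K}{v^{\downarrow}}$.

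For the upper bound $\support{\Spec{K}}{B} \le \support{K}{\lambda(B)}$ I would invoke the trace inequality $\tr(AB) \le \inner{\lambda(A)^{\downarrow}, \lambda(B)^{\downarrow}}$, valid for all $A, B \in \SymMat$. For $A \in \Spec{K}$ we have $\lambda(A) \in K$, hence $\lambda(A)^{\downarrow} \in K$ by symmetry, and therefore
\[
    \tr(AB) \ \le \ \inner{\lambda(A)^{\downarrow}, \lambda(B)^{\downarrow}} \ \le \ \support{K}{\lambda(B)^{\downarrow}} \ = \ \support{K}{\lambda(B)}.
\]
Taking the supremum over $A \in \Spec{K}$ gives the desired upper bound.

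For the matching lower bound I would exhibit, for each $q \in K$, an explicit $A \in \Spec{K}$ realising $\tr(AB) = \inner{q, \lambda(B)}$. Fix an orthogonal diagonalization $B = U^{\top} \diag(\lambda(B)) U$, and set $A := U^{\top} \diag(q) U$. Then the spectrum of $A$ is a permutation of the entries of $q$, so $\lambda(A) \in K$ by symmetry of $K$, hence $A \in \Spec{K}$; moreover, cyclicity of the trace gives $\tr(AB) = \tr(\diag(q)\diag(\lambda(B))) = \inner{q, \lambda(B)}$. Taking the supremum over $q \in K$ yields $\support{\Spec{K}}{B} \ge \support{K}{\lambda(B)}$.

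Beyond the trace inequality itself there is no substantial obstacle; the only piece of care required is that $\lambda(\cdot)$ is not equipped with a canonical ordering, and the $\SymGrp_d$-symmetry of $K$ cleanly absorbs every ordering ambiguity that appears in the rearrangement and trace inequalities.
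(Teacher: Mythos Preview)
Your argument is correct. The upper bound via the von Neumann--Fan trace inequality and the lower bound via an explicit simultaneous diagonalization both go through exactly as you wrote them, and the $\SymGrp_d$-symmetry of $K$ does indeed absorb all ordering issues.

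Your route, however, differs from the paper's. Rather than invoking the trace inequality directly, the paper first uses the $O(d)$-invariance of $\Spec{K}$ (via cyclicity of the trace) to reduce to the case where $B$ is diagonal. Once $B'=\diag(\lambda(B))$, the pairing $\inner{A,B'}$ depends only on $\Diag(A)$, so
\[
\support{\Spec{K}}{B'} \ = \ \sup_{A\in\Spec{K}}\inner{\Diag(A),\Diag(B')} \ = \ \sup_{p\in \Diag(\Spec{K})}\inner{p,\lambda(B)} \, ,
\]
and the paper then appeals to Lemma~\ref{lem:sec_project}, which says $\Diag(\Spec{K})=K$, to identify this with $\support{K}{\lambda(B)}$. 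In other words, the Schur--Horn content enters through the earlier structural lemma ``diagonal projection equals diagonal section'' rather than through the trace inequality. Your proof is more self-contained and makes the extremal inequality explicit; the paper's proof is shorter because it recycles Lemma~\ref{lem:sec_project} and highlights that this lemma is really the engine behind most of the basic properties of spectral convex sets.
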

\begin{proof}
    Let $B = g B' g^t$ for $g \in O(d)$ and $B'$ diagonal.  Using the fact
    that the trace is invariant under cyclic shifts, we see that
	$\support{\Spec{K}}{B} = \support{\Spec{K}}{B'}$. Lemma~\ref{lem:sec_project} and the fact
    that $\inner{A,B'} = \inner{\Diag(A),\Diag(B')}$ finishes the proof.
\end{proof}

\newcommand\OF{\overline{\mathcal{F}}}%
Faces of $\Spec{K}$ and $K$ come in $O(d)$- and $\SymGrp_d$-orbits,
respectively. The collection of faces up to symmetry is a partially ordered
set with respect to inclusion that we denote by $\OF(\Spec{K})$ and $\OF(K)$
respectively.

\begin{cor}
    For any symmetric convex body $K \subset \R^d$, the posets $\OF(K)$ and
    $\OF(\Spec{K})$ are canonically isomorphic.
\end{cor}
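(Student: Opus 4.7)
The plan is to construct an order-preserving bijection $\Phi \colon \OF(\Spec{K}) \to \OF(K)$ that tracks sorted eigenvalues. Given $[F] \in \OF(\Spec{K})$, pick any $A \in \relint F$ and diagonalize $A = g\, \diag(\lambda(A))\, g^t$ with $g \in O(d)$ and $\lambda(A) \in \R^d_\downarrow \defeq \{x \in \R^d : x_1 \geq \cdots \geq x_d\}$. Replacing $F$ with its $O(d)$-conjugate $g^t F g$ within the orbit, I may assume $\diag(q) \in \relint F$ for $q = \lambda(A) \in \R^d_\downarrow$, which lies in $K$ by Lemma~\ref{lem:sec_project}. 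Let $G \subseteq K$ be the unique face of $K$ whose relative interior contains $q$, and set $\Phi([F]) \defeq [G]$. The inverse $\Psi$ sends $[G] \in \OF(K)$ to the orbit of the smallest face of $\Spec{K}$ containing $\diag(q)$, for any $q \in \relint G \cap \R^d_\downarrow$ (after translating $G$ by $\SymGrp_d$ so it meets the chamber).

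The heart of the argument is the following well-definedness lemma: if $\diag(q)$ and $\diag(q')$ both lie in $\relint F$ with $q, q' \in \R^d_\downarrow$, then the smallest face $G_q$ of $K$ containing $q$ equals $G_{q'}$. I would prove this by extending the segment $[\diag(q), \diag(q')] \subseteq \relint F$ slightly past each endpoint while staying inside the relatively open set $\relint F$. The extensions remain diagonal matrices, and hence Lemma~\ref{lem:sec_project} places their diagonal entries in $K$. As a consequence, $q$ lies strictly in the interior of a segment inside $K$ with both endpoints in $K$, and the defining property of the face $G_q$ (which has $q$ in its relative interior) forces this entire segment into $G_q$; in particular $q' \in G_q$, whence $G_{q'} \subseteq G_q$, and the symmetric argument yields $G_q = G_{q'}$.

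Once $\Phi$ is well-defined, checking $\Phi \circ \Psi = \mathrm{id}$ and $\Psi \circ \Phi = \mathrm{id}$ is a direct computation from the constructions, and order-preservation in both directions is straightforward since smaller faces have fewer and more restricted interior points. I expect the main obstacle to be extending the well-definedness lemma from pairs of diagonal interior points to arbitrary pairs $A, A' \in \relint F$: two interior points can require different diagonalizing elements $g, g' \in O(d)$, producing two $O(d)$-conjugate but distinct representatives of $[F]$, each containing a sorted diagonal in its relative interior, and one must show the two resulting faces of $K$ land in the same $\SymGrp_d$-orbit. I would handle this by carefully analyzing the stabilizer in $O(d)$ of a sorted diagonal matrix (generated by permutations of equal-value blocks and block-orthogonal matrices fixing each level set), and using it to reduce the ambiguity between the two representatives to a $\SymGrp_d$-action on the associated face of $K$.
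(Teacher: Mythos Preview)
The paper records this corollary immediately after Proposition~\ref{prop:support} and supplies no argument at all; the intended reasoning is via the support-function identity $h_{\Spec{K}}(B)=h_K(\lambda(B))$, which matches the face of $\Spec{K}$ exposed by $B$ with the face of $K$ exposed by $\lambda(B)$, and hence matches $O(d)$-orbits of exposed faces with $\SymGrp_d$-orbits of exposed faces because $B\mapsto\lambda(B)$ already identifies the two orbit spaces of directions.

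Your route through carrier faces is genuinely different and more explicit. The ``easy'' well-definedness lemma (two sorted diagonals in the same $\relint F$) is fine. The step you flag as the main obstacle is a real gap, and the stabilizer sketch does not close it as written: the element $h\in O(d)$ carrying one diagonal representative of $[F]$ to another has no reason to lie in, or to be reducible modulo $\mathrm{Stab}_{O(d)}(F)$ to, the stabilizer of a sorted diagonal matrix, so it is not clear how the ambiguity collapses to a $\SymGrp_d$-action on faces of $K$. Concretely, from $F_0'=hF_0h^t$ with $\diag(q)\in\relint F_0$ and $\diag(q')\in\relint F_0'$ you only get $h\,\diag(q)\,h^t\in\relint F_0'$, a point that is typically \emph{not} diagonal, so your segment-extension argument (which needs both endpoints diagonal) does not apply.

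Two ways to finish. First, stay closer to the paper: for $B=\diag(b)$ with $b$ sorted one checks directly from Proposition~\ref{prop:support} that $\Spec{K}^{B}\cap\diag(\R^d)=\diag(K^{b})$, so intersecting with the diagonal recovers the $K$-face from the $\Spec{K}$-face; this gives a well-defined bijection on orbits of \emph{exposed} faces, and one then passes to arbitrary faces by iterating ``exposed face of an exposed face''. Second, one can invoke the slice/Weyl-group correspondence for polar actions (the action of $O(d)$ on $\SymMat$ with slice $\diag(\R^d)$ and Weyl group $\SymGrp_d$), as in the Biliotti--Ghigi--Heinzner references in Section~\ref{sec:misc}, which yields the face-orbit bijection in one stroke and is likely what the authors have in mind when they call the isomorphism ``canonical''.
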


\newcommand\polar[1]{{#1}^\circ}%
The \Def{polar} of a convex set $K \subset \R^d$ is defined as
\[
	\polar{K} \ \defeq \ \{ y \in \R^d : \support{K}{y} \le 1 \} \, .
\]
It is easy to see that the polar of a symmetric convex set is symmetric.
In combination with Proposition~\ref{prop:support}, we can deduce that the class
of spectral convex sets is closed under polarity. 

\begin{thm} \label{thm:polarity}
    If $K$ is a closed symmetric convex set, then $\polar{\Spec{K}} \ = \ \Spec{\polar{K}}$.
\end{thm}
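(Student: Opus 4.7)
The plan is to unpack both sides of the claimed equality using the definition of the polar together with the support-function identity from Proposition~\ref{prop:support}. Once this is done, the statement reduces to a short chain of elementary equivalences.

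Before starting, I would note that $\Spec{\polar{K}}$ is well-defined: as already observed in the text, the polar of a symmetric convex set is symmetric, so $\polar{K}$ is $\SymGrp_d$-invariant and the ordering of the eigenvalues does not matter in the definition $\Spec{\polar{K}} = \{B \in \SymMat : \lambda(B) \in \polar{K}\}$.

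The core argument is the following chain of equivalences for an arbitrary matrix $B \in \SymMat$: by the definition of the polar, $B \in \polar{\Spec{K}}$ if and only if $\support{\Spec{K}}{B} \le 1$; by Proposition~\ref{prop:support}, this is equivalent to $\support{K}{\lambda(B)} \le 1$; by the definition of the polar applied to $K$, this in turn is equivalent to $\lambda(B) \in \polar{K}$; and by the definition of a spectral convex set, that is the same as $B \in \Spec{\polar{K}}$. Stringing the equivalences together proves both inclusions at once.

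I do not expect any real obstacle: once Proposition~\ref{prop:support} is available, the result is essentially formal. The content sits entirely in the fact that the support function of $\Spec{K}$ factors through the eigenvalue map $\lambda$, which is why polarity and the spectral construction commute.
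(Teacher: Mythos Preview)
Your proposal is correct and mirrors the paper's own proof essentially line for line: both unwind the definition of polarity, invoke Proposition~\ref{prop:support} to replace $\support{\Spec{K}}{B}$ by $\support{K}{\lambda(B)}$, and then read off the conclusion. Your added remark on the well-definedness of $\Spec{\polar{K}}$ is a nice touch but not a departure in method.
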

\begin{proof}
    For $B \in \SymMat$, we have $B \in \polar{\Spec{K}}$ if and only if $1
	\ge \support{\Spec{K}}{B} = \support{K}{\lambda(B)}$, which happens if and only if
    $\lambda(B) \in \polar{K}$.
\end{proof}
Furthermore, since polyhedra are also closed under polarity, it follows that
the class of spectral polyhedra is closed under polarity.
Theorem~\ref{thm:polarity}, like many of the convex analytic facts in this
section, can be deduced from results of Lewis on extended real-valued spectral
functions~\cite{lewis1996convex}.

Proposition~\ref{prop:support} can also be used to show that spectral convex
bodies interact nicely with Minkowski sums.

\begin{cor} \label{cor:Minkowski_sum}
    If $K,L \subset \R^d$ are symmetric convex bodies, then
        $\Spec{K+L}  =  \Spec{K} + \Spec{L}$.
\end{cor}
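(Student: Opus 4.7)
The natural approach is to compare support functions, since a compact convex set is uniquely determined by its support function and support functions are additive under Minkowski sums. Because both $K$ and $L$ are symmetric convex bodies, their Minkowski sum $K+L$ is again symmetric (for any $\sigma \in \SymGrp_d$ one has $\sigma(K+L) = \sigma K + \sigma L = K + L$), so Proposition~\ref{prop:support} applies to all three sets $K$, $L$, and $K+L$.

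The plan is to compute, for an arbitrary $B \in \SymMat$,
\begin{equation*}
    \support{\Spec{K+L}}{B} \ = \ \support{K+L}{\lambda(B)} \ = \ \support{K}{\lambda(B)} + \support{L}{\lambda(B)} \ = \ \support{\Spec{K}}{B} + \support{\Spec{L}}{B},
\end{equation*}
where the first and last equalities are Proposition~\ref{prop:support} and the middle one is the standard additivity of support functions under Minkowski sum. The right-hand side equals $\support{\Spec{K}+\Spec{L}}{B}$ by the same additivity property applied in $\SymMat$.

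Before concluding from equality of support functions, one needs both sides to be closed convex bodies. Since $K$ is bounded and symmetric, the set $\Spec{K}$ is bounded (a symmetric matrix $A$ with $\lambda(A) \in K$ has Frobenius norm $\|A\|_F^2 = \sum_i \lambda_i(A)^2$ bounded by $\max_{p \in K}\|p\|^2$), and it is closed as the preimage of the closed set $K$ under the continuous map $\lambda$. The same holds for $\Spec{L}$ and $\Spec{K+L}$, and the Minkowski sum of two compact convex sets is compact and convex. Hence $\Spec{K+L}$ and $\Spec{K}+\Spec{L}$ are two compact convex subsets of $\SymMat$ with identical support functions, and therefore coincide.

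No step here is really an obstacle: the only subtlety is checking that the support-function identity may indeed be invoked, which reduces to the boundedness and closure observations above together with Proposition~\ref{prop:support}. The heart of the argument is the reduction via $\support{\Spec{M}}{B} = \support{M}{\lambda(B)}$, which transports the Minkowski additivity from $\R^d$ to $\SymMat$.
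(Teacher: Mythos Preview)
Your proof is correct and follows essentially the same approach as the paper: both compute support functions using Proposition~\ref{prop:support} together with the additivity of support functions under Minkowski sum. The paper's version is just the one-line chain of equalities; you add the (welcome) verification that both sides are compact convex sets so that equality of support functions indeed yields equality of the sets.
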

\begin{proof}
    We compute
    \begin{align*}
	    \support{\Spec{K}+\Spec{L}}{B} & \ = \    
	\support{\Spec{K}}{B}  + \support{\Spec{L}}{B}  \ = \    
	\support{K}{\lambda(B)}  + \support{L}{\lambda(B)}\\ &  \ = \    
	\support{K+L}{\lambda(B)}  \ = \   
	\support{\Spec{K+L}}{B} \, .
    \qedhere
    \end{align*}
\end{proof}

We can use this property to  simplify the computation of basic
convex-geometric invariants; cf.~the book by Schneider~\cite{Schneider}. Let $B(\R^d)$ denote the Euclidean unit ball in
$\R^d$. The \Def{Steiner polynomial} of a convex body $K \subset \R^d$ is 
\[
    \vol(K + t B(\R^d)) \ = \ 
    W_d(K) + d W_{d-1}(K) t + \cdots + \tbinom{d}{d} W_0(K) t^d \, .
\]
The coefficients $W_i(K)$ are called \Def{querma{\ss}integrals}.  The
following reduces the computation of Steiner polynomials of $\Spec{K}$ to the
computation of an integral over $K$.
\begin{thm}
    Let $K \subset \R^d$ be a symmetric convex body. Then
    \[
    \vol(\Spec{K} + t B(\SymMat)) \ = \
    2^{\frac{1}{2}d(d+3)}\prod_{r=1}^d\frac{\pi^{\frac{r}{2}}}{\Gamma(\frac r 2)}
    \int_{K + t B_d}
    \prod_{i < j} |p_j - p_i| \,  dp
    \]
\end{thm}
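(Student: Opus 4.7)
The approach is a two-step reduction: first unfold the Minkowski sum inside the volume using Corollary~\ref{cor:Minkowski_sum}, then compute the resulting spectral volume via the Weyl integration formula.

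The key observation for the reduction is that the Frobenius unit ball $B(\SymMat)=\{A:\tr(A^{2})\le 1\}$ is itself a spectral set: since $\tr(A^{2})=\sum_{i}\lambda_{i}(A)^{2}$, it equals $\Spec{B_{d}}$, where $B_{d}\subset\R^{d}$ is the Euclidean unit ball. As $B_{d}$ is (rotationally, and in particular $\SymGrp_d$-)symmetric, Corollary~\ref{cor:Minkowski_sum} gives
\[
\Spec{K}+t\,B(\SymMat)\;=\;\Spec{K}+\Spec{tB_{d}}\;=\;\Spec{K+tB_{d}}.
\]
Thus the theorem reduces to establishing a formula of the form $\vol(\Spec{L})=c_{d}\int_{L}\prod_{i<j}|p_{j}-p_{i}|\,dp$ for an arbitrary symmetric convex body $L$, and then specializing to $L=K+tB_{d}$.

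To compute $\vol(\Spec{L})$, parametrize $A\in\SymMat$ by $(g,\lambda)\in O(d)\times\R^{d}$ via $A=g\,\diag(\lambda)\,g^{t}$. A matrix with distinct eigenvalues has exactly $d!\cdot 2^{d}$ preimages under this map—the orbit of the Weyl group $\SymGrp_{d}\ltimes O(1)^{d}$ permuting eigenvalues and flipping signs of eigenvectors—and the classical Weyl integration formula computes the Jacobian to be the Vandermonde $\prod_{i<j}|\lambda_{j}-\lambda_{i}|$. Since $L$ is symmetric, the indicator $\mathbf{1}_{\Spec{L}}$ depends only on the multiset of eigenvalues, so the integration over $O(d)$ factors out, producing
\[
\vol(\Spec{L})\;=\;\frac{\vol(O(d))}{d!\,2^{d}}\int_{L}\prod_{i<j}|p_{j}-p_{i}|\,dp.
\]

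The final step is to identify $\vol(O(d))/(d!\,2^{d})$ with $2^{d(d+3)/2}\prod_{r=1}^{d}\pi^{r/2}/\Gamma(r/2)$. This uses the classical surface-area formula $\vol(O(d))=\prod_{r=1}^{d}2\pi^{r/2}/\Gamma(r/2)$ with the Haar measure induced by the Frobenius metric, together with the additional power of $2$ required to convert between coordinate Lebesgue $\prod_{i\le j}dA_{ij}$ and Frobenius-induced Lebesgue on $\SymMat$ (off-diagonal entries contribute a factor $\sqrt{2}$ each). The main technical obstacle is purely bookkeeping: four different conventions—the stabilizer $O(1)^{d}$, the Weyl group $\SymGrp_{d}$, the precise normalization of $\vol(O(d))$, and the Lebesgue/Frobenius discrepancy on $\SymMat$—each contribute powers of $2$, and assembling them consistently to yield the exponent $d(d+3)/2$ is where the care is needed.
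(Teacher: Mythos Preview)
Your proposal is correct and follows essentially the same route as the paper: identify $B(\SymMat)=\Spec{B_d}$, apply Corollary~\ref{cor:Minkowski_sum} to reduce to $\vol(\Spec{K+tB_d})$, then parametrize by $(g,p)\mapsto g\,\diag(p)\,g^t$ and factor the integral as $\vol(O(d))\cdot\int_{K+tB_d}\prod_{i<j}|p_j-p_i|\,dp$. The only cosmetic difference is that the paper computes the Jacobian of this map explicitly (by exhibiting bases of the tangent spaces) rather than invoking the Weyl integration formula, and the paper defers all constant bookkeeping to ``Hurwitz formula for the volume of $O(d)$'' whereas you spell out the contributing powers of $2$.
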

\begin{proof}
    Recall from the introduction that the unit ball in $\SymMat$ satisfies
    $B(\SymMat)  = \Spec{B(\R^d)}$. In particular, using
    Corollary~\ref{cor:Minkowski_sum}, we need to determine the volume of
    $\Spec{K + t B(\R^d)}$. 

    Let $\varphi : O(d) \times \R^d \to \SymMat$ with $\varphi(g,p) := g
    \diag(p) g^t$. Then by Corollary~\ref{cor:convex}, we need to compute
    $\int_{\varphi(O(d) \times K')} d\mu$, where $K' := K + t B(\R^d)$.

    The differential at $(g,p) \in O(d) \times \R^d$ is the linear map
    $D_{g,p} : T_g O(d) \times T_p \R^d \to T_{\varphi(g,p)} \SymMat$ with
    \[
        D_{g,p}\varphi (Bg, u) \ = \ [g\delta(p)g^t,B] + g D(u) g^t \, ,
    \]
    where $[\,,]$ is the Lie bracket. Now, the linear spaces $T_g O(d) \times
    T_p \R^d$ and  $T_{\varphi(g,p)} \SymMat$ have the same dimension. If $g =
    (g_1,g_2,\dots,g_d) \in O(d)$, then we choose as a basis for the former
    $g_i \wedge g_j := g_i g_j^t - g_j g_i^t \in T_gO(d)$ for $1 \le i < j \le
    d$ and the standard basis $e_1,\dots,e_d \in T_p\R^d = \R^d$. For the
    latter, we choose $g_i \bullet g_j = \frac{1}{2}(g_ig_j^t + g_jg_i^t)$ for
    $1 \le i < j \le d$ and $g_i \bullet g_i $ for $i=1,\dots,d$. We then
    compute
    \[
        D_{g,p}(g_i \wedge g_j) \ = \ (p_j - p_i) \, g_i \bullet g_j
        \quad \text{and} \quad 
        D_{g,p}(e_i) \ = \ g_i \bullet g_i \, .
    \]
    Hence, under the identification $g_i \wedge g_j \mapsto g_i \bullet g_j$
    and $e_i \mapsto g_i \bullet g_i$, $D_{g,p}\varphi$ has eigenvalues $p_j -
    p_i$ for $i < j$ as well as $1$ with multiplicity $d$. This yields
    \[
        \int_{\varphi(O(d) \times K'} \, d\mu \ = \ \int_{O(d) \times K'}
        |\det D_{g,p}\varphi| \, dg dp \ = \  \int_{O(d)}dg \, \int_{K'} \prod_{i <
        j} |p_j - p_i| \, dp \, .
    \]
    Together with Hurwitz formula for the volume of $O(d)$, this yields the
    claim.
\end{proof}

\newcommand\apartial{\partial_\mathrm{alg}}%
The \Def{algebraic boundary} $\apartial K$ of a full-dimensional
closed convex set $K \subset \R^d$ is, up to scaling, the unique polynomial
$f_K \in \R[x_1,\dots,x_d]$ of minimal degree that vanishes on all points $q
\in \partial K$.  If $K$ is symmetric, then $f_K$ is a symmetric polynomial,
that is, $f_K(x_{\sigma^{-1}(1)},\dots,x_{\sigma^{-1}(d)}) =
f_K(x_1,\dots,x_d)$ for all $\sigma \in \SymGrp_d$. By the fundamental theorem
of symmetric polynomials, there is a polynomial $F_K(y_1,\dots,y_d) \in
\R[y_1,\dots,y_d]$ such that $f_K(x_1,\dots,x_d) = F_K(e_1,\dots,e_d)$, where
$e_i$ is the $i$-th elementary symmetric polynomial.

For $A \in \SymMat$, let $\det(A + tI) =  t^d + \eta_1(A) t^{d-1} + \cdots +
\eta_d(A)$ be its characteristic polynomial. The coefficients $\eta_i(A)$ are
polynomials in the entries of $A$ and it is easy to see that $\eta_i(g A g^t)
= \eta_i(A)$. In fact, every polynomial $h$ such that $h(gAg^t) = h(A)$ for
all $g \in O(d)$ and $A \in \SymMat$ can be written as a polynomial in
$\eta_1,\dots,\eta_d$; see \cite[Ch.~12.5.3]{GW}. 

\begin{prop} \label{prop:alg}
    Let $K \subset \R^d$ be a symmetric closed convex set. Then the algebraic
    boundary of $\Spec{K}$ is given by $F_K(\eta_1,\dots,\eta_d)$. In
    particular, $\apartial K$ and $\apartial \Spec{K}$ have the same degree.
\end{prop}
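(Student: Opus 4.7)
The plan is to identify $H(A) := F_K(\eta_1(A), \ldots, \eta_d(A))$ as the minimal-degree polynomial vanishing on $\partial \Spec{K}$. The central observation is that the convention $\det(A + tI) = t^d + \eta_1(A) t^{d-1} + \cdots + \eta_d(A)$ gives $\eta_i(A) = e_i(\lambda(A))$, so $H(A) = f_K(\lambda(A))$. The argument then splits into two parts: (i) show $H$ vanishes on $\partial \Spec{K}$, and (ii) show that every polynomial vanishing on $\partial \Spec{K}$ has degree at least $\deg H$. The ``in particular'' assertion will fall out of (ii).

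For (i), I first establish the boundary correspondence $\partial \Spec{K} = \{A \in \SymMat : \lambda(A) \in \partial K\}$. Both directions use continuity of eigenvalues: if $A_n \to A$ with $A_n \notin \Spec{K}$, then $\lambda(A_n) \notin K$ and $\lambda(A_n) \to \lambda(A)$, forcing $\lambda(A) \in \partial K$. Conversely, if $\lambda(A) \in \partial K$ and $A = U \diag(\lambda(A)) U^t$ is a spectral decomposition, then perturbing the eigenvalues slightly outside of $K$ produces matrices $A_n \to A$ with $A_n \notin \Spec{K}$. Combined with $H(A) = f_K(\lambda(A))$, this yields $H|_{\partial \Spec{K}} \equiv 0$, and $H \ne 0$ since $f_K \ne 0$ and the $\eta_i$ are algebraically independent on $\SymMat$.

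For (ii), let $G$ be a polynomial vanishing on $\partial \Spec{K}$. Since $\partial \Spec{K}$ is $O(d)$-conjugation invariant, the average $\tilde G(A) := \int_{O(d)} G(g A g^t)\, dg$ is a polynomial of degree at most $\deg G$, invariant under $O(d)$-conjugation, and still vanishing on $\partial \Spec{K}$. By the invariant-theoretic fact cited just before the proposition, $\tilde G = P(\eta_1, \ldots, \eta_d)$ for some $P \in \R[y_1, \ldots, y_d]$. Restricting to diagonals, $\tilde G(\diag(p)) = P(e_1(p), \ldots, e_d(p))$ is a symmetric polynomial in $p$ vanishing on $\partial K$ (using the boundary correspondence applied to diagonal matrices to see $\diag(\partial K) \subseteq \partial \Spec{K}$). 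Hence $f_K = F_K(e_1, \ldots, e_d)$ divides $P(e_1, \ldots, e_d)$, and by the fundamental theorem of symmetric polynomials, $F_K \mid P$ in $\R[y_1, \ldots, y_d]$. Tracking degrees, using that the matrix-entry degree of $P(\eta_1, \ldots, \eta_d)$ equals the $p$-degree of $P(e_1, \ldots, e_d)$, we conclude $\deg G \ge \deg \tilde G \ge \deg F_K(\eta_1, \ldots, \eta_d) = \deg f_K = \deg H$, which also gives the claimed equality of degrees of algebraic boundaries.

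I expect the main obstacle to be the bookkeeping in this degree chain --- in particular, verifying that the matrix-entry degree of $P(\eta_1, \ldots, \eta_d)$ agrees with the $p$-degree of $P(e_1, \ldots, e_d)$ with no leading-term cancellation. This should follow from algebraic independence of $\eta_1, \ldots, \eta_d$ as polynomials on $\SymMat$, which is inherited from that of $e_1, \ldots, e_d$ on $\R^d$ by restriction to diagonal matrices, so that the standard weighted grading (with $\deg \eta_i = \deg e_i = i$) is preserved under the substitutions.
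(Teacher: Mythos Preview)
Your overall strategy is right and considerably more detailed than the paper's two-line proof, which just says the first claim ``follows from the discussion above'' and deduces the degree equality from algebraic independence of the $e_i$ and $\eta_i$. The boundary correspondence in (i) and the reduction-to-diagonals argument in (ii) are exactly the content the paper leaves implicit.

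There is, however, one genuine gap in step (ii): the Haar average $\tilde G(A) = \int_{O(d)} G(gAg^t)\,dg$ can be identically zero even when $G$ is not. For example, $G(A) = A_{12}\,H(A)$ vanishes on $\partial\Spec{K}$ but averages to zero, since $\int_{O(d)} (gAg^t)_{12}\,dg$ is an $O(d)$-invariant linear functional vanishing at $A=I$ and hence is identically zero. When $\tilde G = 0$ the inequality $\deg \tilde G \ge \deg H$ is vacuous and your degree chain collapses.

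The repair is to bypass averaging and argue directly that the algebraic boundary $G$ of $\Spec{K}$ is already $O(d)$-invariant. For each $g \in O(d)$ the polynomial $G^g(A) := G(gAg^t)$ vanishes on $\partial\Spec{K}$ with the same degree, so by the uniqueness clause in the definition $G^g = c_g G$ for some $c_g \in \R^\times$. The map $g \mapsto c_g$ is a continuous character of $O(d)$, hence takes values in $\{\pm 1\}$ and is trivial on $SO(d)$. Since every symmetric matrix is $SO(d)$-conjugate to a diagonal one, $G$ cannot vanish on all diagonal matrices; choosing a diagonal $A_0$ with $G(A_0)\neq 0$ and noting that any coordinate reflection fixes $A_0$ forces $c_g = 1$ for all $g$. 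Thus $G$ is $O(d)$-invariant, and you can run the remainder of your argument with $\tilde G = G \neq 0$. This is in effect what the paper's ``follows from the discussion above'' presumes: invariance of $\apartial \Spec{K}$ comes from invariance of $\partial\Spec{K}$ together with uniqueness, not from averaging.
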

\begin{proof}
    The first part follows from the discussion above. For the second part, we
    simply note that the collection of polynomials $e_i$ and $\eta_i$ are
    algebraically independent with corresponding degrees.
\end{proof}

\section{Spectrahedra}\label{sec:spectrahedra}

In this section, we show that spectral polyhedra are spectrahedra. For $P =
\Pi(p)$ a permutahedron and $\SH(p) = \Spec{P}$, a Schur-Horn orbitope, this was
shown in~\cite{SSS11}. We briefly recall the construction, which will then be
suitably generalized.

A point $q \in \R^d$ is contained in $\Pi(p)$ if and only if $q \major p$.
This condition can be rewritten in terms of linear inequalities. For $I
\subseteq [d]$, we write $q(I) = \sum_{i \in I} q_i$. Then $q \major p$ if and
only if
\[
    s_d(p) \ = \ q([d]) \quad \text{ and } \quad
    s_{|I|}(p) \ \ge \ q(I) \quad \text{ for all } \emptyset \neq I
    \subsetneq [d].
\]
If $p$ is generic, that is, $p_i \neq p_j$ for $i \neq j$, then it is easy to
show that the system of  $2^d - 2$ linear inequalities is irredundant.

\newcommand\SF{\mathcal{L}}%
For $1 \le k \le d$, the \Def{$k$-th linearized Schur functor} $\SF_k$ is a
linear map from $\SymMat$ to $\Sym\bigwedge^k \R^d$ such that the eigenvalues
of $\SF(A)$ are precisely $\lambda(A)(I) = \sum_{i \in I} \lambda(A)_i$ for $I
\subseteq [d]$ and $|I| = k$.  Therefore, $\SH(p)$ is precisely the set
of points $A \in \SymMat$ such that 
\begin{equation}\label{eqn:SH_spec}
    s_d(p) \ = \ \tr(A) \quad \text{ and } \quad
    s_k(p) \, I_{\binom{d}{k}} \ \succeq \ \SF_k(A) \quad \text{ for all }
    1 \le k < d \, .
\end{equation}

The simplest symmetric polyhedron has the form
\[ 
    P_{a,b} \ = \  \{x\in \R^d : \inner{ \sigma a, x} \le b \text{ for }
    \sigma \in \SymGrp_d\}%
\]
where $a\in \R^d$ and $b\in \R$. In general, a symmetric polyhedron has the form
\[
    P \ = \ \{ x \in \R^d : \inner{ \sigma a_i, x } \le b_i \text{ for } \sigma
    \in \SymGrp_d \text{ and } i = 1,\dots,M \} 
    \ = \ 
    \bigcap_{i=1}^M P_{a_i,b_i}
    \,, 
\]
Since $\Spec{K \cap L} \ = \ \Spec{K} \cap \Spec{L}$, it suffices to focus on the case
$P_{a,b}$.

To extend the representation~\eqref{eqn:SH_spec} directly, for each general $a\in \R^d$, we would need a
linear map $\SF_a$ from $\SymMat$ to $\Sym V$ with $\dim V = d!$ such that the
eigenvalues of $\SF_a(A)$ are precisely $\inner{\sigma a, \lambda(A)}$ for all
$\sigma \in \SymGrp_d$.  For $a = (1,\dots,1,0,\dots,0)$ with $k$ ones, this
is realized by the linearized Schur functors. 

\newcommand\adj{\mathop{adj}}%
\begin{prop}
    For $d = 2$, set
    \[
        \SF_a(A) \ := \ a_1 A + a_2 \adj(A) \, ,
    \]
    where $\adj(A)$ is the adjugate (or cofactor) matrix. Then $A
    \mapsto \SF_a(A)$ is a linear map satisfying the above requirements.
\end{prop}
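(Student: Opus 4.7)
The plan is to verify the two requirements directly: linearity of $\SF_a$, and that its eigenvalues realize the orbit $\{\langle \sigma a, \lambda(A)\rangle : \sigma \in \SymGrp_2\}$. Since $d=2$, the target $\Sym V$ with $\dim V = d! = 2$ is itself $\SymMat$, a $3$-dimensional space matching the domain.

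First I would dispose of linearity. For a $2\times 2$ matrix the adjugate has the explicit closed form
\[
    \adj\begin{pmatrix} \alpha & \beta \\ \beta & \gamma \end{pmatrix}
    \ = \ \begin{pmatrix} \gamma & -\beta \\ -\beta & \alpha \end{pmatrix},
\]
which is manifestly linear in the entries and sends symmetric matrices to symmetric matrices. Consequently $\SF_a(A) = a_1 A + a_2 \adj(A)$ is a linear map $\SymMat \to \SymMat$.

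Next I would compute the eigenstructure. If $A = g D g^t$ with $g \in O(2)$ and $D = \diag(\lambda_1,\lambda_2) = \diag(\lambda(A))$, then since $\adj$ is multiplicative and $\adj(g) = g^t$ for $g \in O(2)$ (up to a sign that cancels against $\adj(g^t)$), one obtains $\adj(A) = g\,\adj(D)\,g^t = g\,\diag(\lambda_2,\lambda_1)\,g^t$. Equivalently, one can bypass the invertibility issue entirely by noting that the identity $\adj(A) = g\,\adj(D)\,g^t$ is a polynomial identity in the entries of $A$, so it holds on all of $\SymMat$ by Zariski density of the invertible symmetric matrices; on the dense open locus it follows from $\adj(A) = \det(A)A^{-1}$. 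Either route gives the conclusion that $A$ and $\adj(A)$ are simultaneously diagonalized by $g$ with eigenvalues swapped.

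Plugging this in,
\[
    \SF_a(A) \ = \ g\bigl( a_1 \diag(\lambda_1,\lambda_2) + a_2\diag(\lambda_2,\lambda_1)\bigr)g^t
    \ = \ g\,\diag\bigl(a_1\lambda_1 + a_2\lambda_2,\; a_2\lambda_1 + a_1\lambda_2\bigr)\,g^t,
\]
so the eigenvalues of $\SF_a(A)$ are exactly $\langle a, \lambda(A)\rangle$ and $\langle \sigma a, \lambda(A)\rangle$ where $\sigma$ is the transposition in $\SymGrp_2$, as required. There is essentially no hard step; the only mild subtlety is the need to avoid restricting to invertible $A$ when invoking the $\adj(A) = \det(A)A^{-1}$ formula, which is handled by the density argument above or, equivalently, by the explicit $2\times 2$ formula.
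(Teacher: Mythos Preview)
Your proof is correct and follows essentially the same approach as the paper: establish linearity of $\adj$ in the $2\times 2$ case, observe that $A$ and $\adj(A)$ are simultaneously diagonalizable with swapped eigenvalues, and read off the eigenvalues of $\SF_a(A)$. You simply spell out more details (the explicit $2\times 2$ adjugate formula and the orthogonal-conjugation argument) than the paper does.
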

\begin{proof}
    Since $d=2$, the map $A \mapsto \adj(A)$ is linear.  The matrices $A$ and
    $\adj(A)$ can be simultaneously diagonalized and hence it suffices to
    assume that $A = \diag(\lambda_1,\lambda_2)$. In that case $\adj(A) =
    \diag(\lambda_2,\lambda_1)$, which proves the claim.
\end{proof}

The construction above only works for $d=2$ and we have not been able to
construct such a map for $d \ge 3$.

\begin{quest}
    Does $\SF_a$ exist for $d \ge 3$?
\end{quest}

\newcommand\cI{\mathcal{I}}%
We pursue a different approach towards a spectrahedral representation by
considering a redundant set of linear inequalities for $P_{a,b}$. An
ordered collection $\cI = (I_1,\dots,I_d)$ of subsets $I_j \subseteq [d]$ is
called a \Def{numerical chain} if $|I_j| = j$ for all $j$. A numerical chain is
a \Def{chain} if additionally $I_1 \subset I_2 \subset \cdots \subset I_d$.
Chains are in bijection to permutations $\sigma \in \SymGrp_d$ via $I_j = \{
\sigma(1),\dots,\sigma(j) \}$. For $I \subseteq [d]$, we write $\1_I \in
\{0,1\}^d$ for its characteristic vector.

Let us assume that $a = (a_1 \ge a_2 \ge \cdots \ge a_d)$ and set $a_{d+1} :=
0$. For a numerical chain $\cI$, we define
\begin{equation}\label{eqn:cI} 
    a^\cI \ := \  (a_1 - a_2)\1_{I_1} + (a_2 - a_3)\1_{I_2} + \cdots +
    (a_{d-1} - a_d)\1_{I_{d-1}} + a_d \1_{I_d} \, .
\end{equation}
    
\begin{prop}\label{prop:Pi_redundant}
    Let $a = (a_1 \ge a_2 \ge \cdots \ge a_d)$ and $b\in \R$. Then 
    \[
	    P_{a,b}\ = \ \{ x \in \R^d : \inner{a^\cI,x} \le b \text{ for all
    numerical chains } \cI \} \, .
    \]
\end{prop}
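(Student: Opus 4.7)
The plan is to show both inclusions by reducing each inequality $\langle a^\cI, x\rangle \le b$ to the partial-sum (majorization-type) functionals $s_k(x) = $ sum of the $k$ largest coordinates of $x$, and then using the assumption $a_1 \ge a_2 \ge \cdots \ge a_d$ together with Abel summation.

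First I would make the key telescoping computation. Observe that if $\cI$ is an honest \emph{chain} corresponding to a permutation $\sigma$ via $I_j = \{\sigma(1),\dots,\sigma(j)\}$, then evaluating $a^\cI$ coordinate by coordinate yields coefficient $(a_k-a_{k+1}) + (a_{k+1}-a_{k+2}) + \cdots + (a_{d-1}-a_d) + a_d = a_k$ at position $\sigma(k)$, so $a^\cI = \sigma a$. Hence the inequalities $\langle a^\cI, x\rangle \le b$ coming from chains are precisely the defining inequalities of $P_{a,b}$, giving the inclusion $\supseteq$ restricted to chains and showing that the inequalities are indeed necessary when $\cI$ ranges only over chains.

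Second, I would rewrite a general numerical chain inequality in a form amenable to comparison with the permutation inequalities. By definition,
\[
\langle a^\cI, x\rangle \ = \ \sum_{j=1}^{d-1}(a_j - a_{j+1})\, x(I_j) + a_d\, x(I_d),
\]
where $x(I) = \sum_{i \in I} x_i$. Since $|I_j|=j$, we have $x(I_j) \le s_j(x)$, and because $a_j - a_{j+1} \ge 0$ and $x(I_d) = s_d(x)$,
\[
\langle a^\cI, x\rangle \ \le \ \sum_{j=1}^{d-1}(a_j-a_{j+1})\, s_j(x) + a_d\, s_d(x).
\]
The right-hand side is, by Abel summation, equal to $\sum_{k=1}^d a_k\, x_{[k]}$ where $x_{[1]} \ge \cdots \ge x_{[d]}$ is $x$ sorted decreasingly; this is $\max_{\sigma \in \SymGrp_d}\langle \sigma a, x\rangle$ by the rearrangement inequality. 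So for $x \in P_{a,b}$ the latter quantity is bounded by $b$, yielding $\langle a^\cI, x\rangle \le b$. This proves $P_{a,b} \subseteq \{x : \langle a^\cI,x\rangle \le b \text{ for all numerical chains } \cI\}$.

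The reverse inclusion follows immediately from the first step: chains are numerical chains, and the chain inequalities alone already cut out $P_{a,b}$. The main ``content'' is the monotonicity argument in the second step, which is essentially the observation that replacing $I_j$ with $[d]$'s $j$ largest $x$-coordinate positions only increases the linear form; this is straightforward and I do not anticipate a real obstacle, only the bookkeeping to make the telescoping identity and the identification $a^\cI = \sigma a$ for chains fully explicit.
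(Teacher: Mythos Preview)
Your argument is correct. Both your proof and the paper's hinge on the same telescoping decomposition $a^\cI = \sum_j (a_j-a_{j+1})\1_{I_j}$ with nonnegative coefficients, and both identify chain-$\cI$'s with permutations to get one inclusion. The only difference is in how the other inclusion is argued: the paper shows $a^\cI \major a$ by bounding $s_k(a^\cI) \le \sum_j (a_j-a_{j+1}) s_k(\1_{I_j}) = s_k(a)$, so that $a^\cI \in \Pi(a)$ and hence $\langle a^\cI,x\rangle \le b$ is automatically valid for $P_{a,b}$; you instead bound $\langle a^\cI,x\rangle \le \sum_j (a_j-a_{j+1}) s_j(x) = \max_\sigma \langle \sigma a,x\rangle$ directly via Abel summation and rearrangement. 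These are dual versions of the same computation. The paper's route yields the slightly stronger standalone fact $a^\cI \in \Pi(a)$, while yours is more self-contained in that it avoids invoking the majorization description of the permutahedron.
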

\begin{proof}
    Let $Q$ denote the right-hand side. To see that $Q \subseteq
	P_{a,b}$, %
	we note that if $\cI$ is a chain corresponding to a
    permutation $\sigma$, then $a^\cI = \sigma a$.

    For the reverse inclusion, it suffices to show that $a^\cI \major a$,
    which implies that  $\inner{a^\cI,x} \le b$ is a valid inequality for
	$P_{a,b}$. %
	Using the fact that $s_k(p+q) \le s_k(p) + s_k(q)$, we
    compute
    \[
        s_k(a^\cI)  \le  \sum_{j=1}^d (a_j - a_{j+1}) s_k(\1_{I_j})
         =  \sum_{j=1}^{k-1} j (a_j - a_{j+1}) + k \sum_{j=k}^d (a_j -
        a_{j+1})  =  a_1 + \cdots + a_k  =  s_k(a) \, .
    \]
    Similarly $s_d(a^\cI) = a_1 + \cdots + a_d$, which completes the
    proof.
\end{proof}

\newcommand\SFh{\widehat{\SF}}%

Recall that for matrices $A \in \SymMat$ and $B \in \Sym\R^e$, the tensor
product $A \otimes B$ is a symmetric matrix of order $de$ with eigenvalues
$\lambda_i(A) \cdot \lambda_j(B)$ for $i=1,\dots,d$ and $j=1,\dots,e$. For $a
= (a_1 \ge \cdots \ge a_d)$, let 
\[
    \SFh_a : 
    \textstyle{\bigwedge^1\R^d} \otimes
    \bigwedge^2\R^d \otimes
    \cdots \otimes
    \bigwedge^d\R^d
    \ \to \
    \bigwedge^1\R^d \otimes
    \bigwedge^2\R^d \otimes
    \cdots \otimes
    \bigwedge^d\R^d
\]
be the linear map given by 
\[
    \SFh_a(A) \ \defeq \ \sum_{j=1}^d (a_j - a_{j+1}) 
    I_{\binom{d}{1}} \otimes
    \cdots \otimes
    I_{\binom{d}{j-1}} \otimes
    \SF_j(A) \otimes
    I_{\binom{d}{j+1}} \otimes
    \cdots \otimes
    I_{\binom{d}{d}}  \, .
\]

\begin{thm}\label{thm:spectrahedron}
	Let $P = P_{a_1,b_1} \cap \cdots \cap P_{a_M,b_M}$
	be a symmetric polyhedron. Then $A \in \Spec{P}$ if and only if
	\[ b_i\, I \succeq \SFh_{a_i}(A)\;\;\textup{for $i=1,2,\ldots,M$}.\]
\end{thm}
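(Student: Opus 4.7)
The plan is to reduce to the case of a single orbit of inequalities and then identify the eigenvalues of $\SFh_a(A)$ explicitly. The text preceding the theorem already observes that $\Spec{P} = \bigcap_{i=1}^M \Spec{P_{a_i,b_i}}$, and since the claimed system of semidefinite constraints is itself a conjunction over $i$, it suffices to treat a single orbit $P_{a,b}$ with $a_1 \ge \cdots \ge a_d$; the full statement then follows by intersecting.

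For the single-orbit case, I would combine the definition $A \in \Spec{P_{a,b}}$ iff $\lambda(A) \in P_{a,b}$ with Proposition~\ref{prop:Pi_redundant}, which characterizes $\lambda(A) \in P_{a,b}$ by the inequalities $\langle a^\cI, \lambda(A)\rangle \le b$ for all numerical chains $\cI = (I_1,\ldots,I_d)$. The remaining task is therefore to show that $b I \succeq \SFh_a(A)$ is equivalent to this redundant family of inequalities.

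The heart of the proof is a spectral computation: I claim the eigenvalues of $\SFh_a(A)$ are exactly the numbers $\langle a^\cI, \lambda(A)\rangle$ as $\cI$ ranges over numerical chains. To establish this, decompose $\SFh_a(A) = \sum_{j=1}^d T_j$ with $T_j \defeq (a_j - a_{j+1})\, I \otimes \cdots \otimes \SF_j(A) \otimes \cdots \otimes I$, where $\SF_j(A)$ occupies the $j$-th tensor factor. The operators $T_j$ act nontrivially on distinct tensor slots, hence pairwise commute and are simultaneously diagonalized on pure tensors $v_1 \otimes \cdots \otimes v_d$ with each $v_j$ an eigenvector of $\SF_j(A)$. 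By the defining property of the linearized Schur functor, the eigenvalues of $\SF_j(A)$ are exactly the sums $\lambda(A)(I_j)$ over $j$-subsets $I_j \subseteq [d]$; summing these contributions over $j$ shows that the eigenvalues of $\SFh_a(A)$ are indexed precisely by numerical chains $\cI$ and equal $\sum_{j=1}^d (a_j - a_{j+1})\,\lambda(A)(I_j) = \langle a^\cI, \lambda(A)\rangle$ by the definition~\eqref{eqn:cI} of $a^\cI$.

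Granted the spectral description, the condition $b I \succeq \SFh_a(A)$ is equivalent to every eigenvalue of $\SFh_a(A)$ being at most $b$, which is exactly the redundant system of Proposition~\ref{prop:Pi_redundant}, giving the desired equivalence with $A \in \Spec{P_{a,b}}$. The main obstacle is the spectral step: one has to check both commutativity of the summands and that as $\cI$ varies over numerical chains the corresponding pure tensors already exhaust a full eigenbasis of $\SFh_a(A)$. A small but worthwhile consistency check is the $j=d$ term, where $\SF_d(A)$ acts on the one-dimensional space $\bigwedge^d \R^d$ as the scalar $\tr(A)$ and, because $a_{d+1} = 0$, contributes $a_d \cdot \tr(A)$ independently of the (forced) choice $I_d = [d]$, matching both the definition of numerical chains and the convention used to extend $a$ in~\eqref{eqn:cI}.
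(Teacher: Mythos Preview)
Your proposal is correct and follows essentially the same route as the paper: reduce to a single orbit, compute the spectrum of $\SFh_a(A)$ via the tensor structure (the paper writes out the explicit eigenbasis $v_{\cI} = v_{I_1}\otimes\cdots\otimes v_{I_d}$ built from wedges of eigenvectors of $A$, which are precisely your ``pure tensors''), identify the eigenvalues as $\inner{a^{\cI},\lambda(A)}$, and then invoke Proposition~\ref{prop:Pi_redundant}.
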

\begin{proof}
Since $\Spec{P} = \bigcap_{i=1}^{M}\Spec{P_{a_i,b_i}}$ it is enough to show that 
	$A\in \Spec{P_{a,b}}$ if and only if $bI \succeq \SFh_{a}(A)$. 

    Let $a = (a_{1} \ge a_{2} \ge \cdots \ge a_{d})$ and $A \in \SymMat$
    with $v_1,\dots,v_d$ an orthonormal basis of eigenvectors. For $I = \{i_1
    < i_2 < \cdots < i_k \}$ a subset of $[d]$, we write $v_I := v_{i_1}
    \wedge v_{i_2} \wedge \cdots \wedge v_{i_k} \in \bigwedge^k \R^d$. Then a
    basis of eigenvectors for $\SFh_a(A)$ is given by
    \[
        v_\cI \ := \ v_{I_1} \otimes v_{I_2} \otimes \cdots \otimes v_{I_d} \,
        ,
    \]
    where $\cI$ ranges of all numerical chains. The eigenvalue of $\SFh_a(A)$
    corresponding to $v_\cI$ is precisely $\inner{a^\cI,\lambda(A)}$. Hence
    $A$ satisfies the given linear matrix inequalities for $a$ if and only
    if $\sum_i \lambda_i(A) = \sum_i a_i$ and $\inner{a^\cI,\lambda(A)} \le b$
    for all $\cI$. By Proposition~\ref{prop:Pi_redundant} this is the case if
	and only if $\lambda(A) \in P_{a,b}$ or, equivalently, $A \in
	\Spec{P_{a,b}}$.
\end{proof}

The spectrahedral representation given in Theorem~\ref{thm:spectrahedron} for
$\Spec{P}$, where $P$ is a symmetric polyhedron in $\R^d$ with $M$ orbits of
facets, is of size
\[
    M \cdot \prod_{i=1}^d \binom{d}{i}.
\]
So the spectrahedral representation is of order $M 2^{d^2}$; see~\cite{LagariasMehta}.

If 
\[
    K \ = \ \{x\in \R^d : A_0 + x_1 A_1 + \cdots + x_d A_d \succeq 0\}
\]
is a spectrahedral representation of a convex set $K$ with $A_0,\dots,A_d \in
\Sym \R^m$ and $A_0$ positive definite, then $h(x) = \det(A_0 + x_1 A_1 +
\cdots + x_d A_d)$ vanishes on $\partial K$. Hence, the size of a
spectrahedral representation is bounded from below by the degree of $\apartial
K$. If $P$ is a symmetric polytope with $M$ full orbits of facets, then its
algebraic boundary has degree $M\cdot d{!}$. From the discussion following
Proposition~\ref{prop:alg}, we can deduce that the degree of $\apartial
\Spec{P}$ is also $M\cdot d{!}$, and so that any spectrahedral representation
of $\Spec{P}$ has size at least $M\cdot d{!}$. While interesting from an
algebraic point of view, spectrahedral representations of symmetric polytopes
are clearly impractical for computational use. In the next section we discuss
substantially smaller representations as projections of spectrahedra.

\section{Spectrahedral shadows}\label{sec:shadows}

In this section, we give a representation of $\Spec{K}$ as a spectrahedral shadow, i.e., 
a linear projection of a spectrahedron, when $K$ is, itself, a symmetric spectrahedral shadow, 
by a direct application of results from~\cite{BenTal-Nemirovski}. The aim of this section is to 
illustrate the significant reductions in size possible by using projected spectrahedral representations. 

It is convenient to use slightly different notation in this section, to
emphasize that we do not need to construct an explicit representation of the
symmetric convex set $K$, to get a representation of $\Spec{K}$.  To this end,
let $\R^d_{\downarrow} = \{p\in \R^d\;:\; p_1 \geq p_2 \geq \cdots \geq
p_d\}$.  For $L\subseteq \R^d_{\downarrow}$ define \[ \Pi(L) =
\conv\,\left(\SymGrp_d \cdot L\right),\] the convex hull of the orbit of $L$
under $\SymGrp_d$.  This is the inclusion-wise minimal symmetric convex set
containing $L$.  We recover the usual permutahedron of a point $p\in
\R^d_{\downarrow}$ by $\Pi(p)$. 

In Theorem~\ref{thm:projspec}, we give a representation of $\Spec{\Pi(L)}$ as a 
spectrahedral shadow whenever $L\subseteq \R_{\downarrow}^d$ is a spectrahedral 
shadow. We use the following result of Ben-Tal and Nemirovski~\cite[Section~4.2,
18c]{BenTal-Nemirovski}.

\begin{lem}\label{lem:BTN}
    Let $1 < k < d$ and $t \in \R$. Then a matrix $A \in \SymMat$ satisfies
    $s_k(\lambda(A)) \le t$ if and only if there are $Z \in \SymMat$ and $s
    \in \R$ such that
    \[
    Z \ \succeq \ 0, \quad Z - A + s I_d \ \succeq \ 0, \quad \text{ and }
    \quad t - ks - \tr(Z) \ \ge \ 0 \, .
    \]
\end{lem}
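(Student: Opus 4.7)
The plan is to reduce the statement to two standard facts and then combine them. First, I would establish the variational identity
\[
    s_k(\lambda(A)) \ = \ \min_{s \in \R}\bigl(\,ks + \tr((A - sI)_+)\,\bigr),
\]
where, for a symmetric matrix $M$, the matrix $(M)_+$ denotes its positive semidefinite part obtained by replacing the negative eigenvalues by zero in the spectral decomposition. Since $A - sI$ has eigenvalues $\lambda_i(A) - s$, one has $\tr((A-sI)_+) = \sum_{i=1}^d (\lambda_i(A) - s)_+$, so the right-hand side equals $\min_s \bigl(ks + \sum_i (\lambda_i(A) - s)_+\bigr)$. Treating the right-hand side as a piecewise linear function of $s$, I would verify by inspection of slopes that its minimum is attained for $s \in [\lambda_{k+1}(A), \lambda_k(A)]$ and equals $\lambda_1(A) + \cdots + \lambda_k(A) = s_k(\lambda(A))$.

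Second, I would record the SDP representation of the positive-part trace:
\[
    \tr(M_+) \ = \ \min\{\,\tr(Z) : Z \succeq 0,\ Z \succeq M\,\}.
\]
The choice $Z = M_+$ is feasible and achieves the value $\tr(M_+)$. For the reverse inequality, let $P$ be the orthogonal projection onto the positive eigenspace of $M$. Then $\tr(Z) \ge \tr(ZP) = \tr(PZP) \ge \tr(PMP) = \tr(M_+)$, where the first inequality uses $Z \succeq 0$ together with $I - P \succeq 0$, and the last inequality uses $Z \succeq M$.

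Combining these two representations (with $M = A - sI$) and jointly minimizing over $s$ and $Z$ yields
\[
    s_k(\lambda(A)) \ = \ \min_{s,Z}\{\,ks + \tr(Z) : Z \succeq 0,\ Z - A + sI \succeq 0\,\},
\]
which is exactly the desired statement: $s_k(\lambda(A)) \le t$ if and only if there exist $Z \in \SymMat$ and $s \in \R$ satisfying $Z \succeq 0$, $Z - A + sI \succeq 0$, and $t - ks - \tr(Z) \ge 0$.

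The main subtlety is the verification of the variational formula in the first step: one must check that the piecewise linear, convex function $s \mapsto ks + \sum_{i=1}^d(\lambda_i(A) - s)_+$ attains its minimum on the interval $[\lambda_{k+1}(A), \lambda_k(A)]$. This amounts to a short slope count, since the slope to the left of $\lambda_i(A)$ drops by $1$ when crossing each eigenvalue, so the slope is nonpositive exactly on $(-\infty, \lambda_k(A)]$ and nonnegative on $[\lambda_{k+1}(A), +\infty)$. Everything else is routine convex and spectral algebra; the hypothesis $1 < k < d$ simply excludes the trivial endpoint cases (where the LMI $tI \succeq A$ or the equation $t = \tr(A)$ already does the job).
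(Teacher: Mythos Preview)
Your argument is correct. The paper, however, does not actually prove this lemma: it is quoted verbatim as a result of Ben-Tal and Nemirovski~\cite[Section~4.2, 18c]{BenTal-Nemirovski} and used as a black box in the proof of Theorem~\ref{thm:projspec}. So there is no ``paper's own proof'' to compare against.

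That said, what you wrote is essentially the standard derivation (and is the argument behind the cited reference). The two ingredients---the scalar variational formula $s_k(\lambda(A)) = \min_s\bigl(ks + \sum_i(\lambda_i(A)-s)_+\bigr)$ and the SDP characterization $\tr(M_+) = \min\{\tr Z : Z\succeq 0,\ Z\succeq M\}$---are exactly the pieces one needs, and your verifications of both are clean. One tiny wording quibble: your phrase ``the slope to the left of $\lambda_i(A)$ drops by $1$ when crossing each eigenvalue'' reads a bit awkwardly (as $s$ \emph{increases} across an eigenvalue the slope \emph{increases} by $1$), but your conclusion about where the slope changes sign is correct. The remark about $1<k<d$ is also accurate: the argument goes through for $k=1$ and $k=d$ as well, but those cases admit the simpler descriptions the paper mentions immediately after the lemma.
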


For the case $k = 1$, we obtain the simpler representation $s_1(\lambda(A)) =
\max \lambda(A) \le t$ if and only if $t I - A \succeq 0$.

\begin{thm}
	\label{thm:projspec}
	If $L\subseteq \R^d_{\downarrow}$ is convex then 
	\begin{equation}
		\label{eq:specpil}
		\Spec{\Pi(L)} = \{A\in \SymMat\;:\; \exists p\in L\;\textup{such that}\;\lambda(A) \major p\}\,.
	\end{equation}
	If $L\subseteq \R^d_{\downarrow}$ is the projection of a spectrahedron of size $r$, then 
	 $\Spec{\Pi(L)}$ is the projection of a spectrahedron of size $r +  2 d^2 - 2d - 2$.
\end{thm}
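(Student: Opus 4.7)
My plan is to first establish the set identity \eqref{eq:specpil} and then translate it into a spectrahedral shadow representation by direct appeal to Lemma~\ref{lem:BTN}. Since $A \in \Spec{\Pi(L)}$ iff $\lambda(A) \in \Pi(L)$, the identity reduces to
\[
\Pi(L) \ = \ \{q \in \R^d : \exists\, p \in L \text{ with } q \major p\}.
\]
The inclusion $\supseteq$ is immediate from $\Pi(p) = \{q : q \major p\} \subseteq \Pi(L)$ for $p \in L$. For $\subseteq$, I take $q = \sum_i \mu_i \sigma_i p_i \in \Pi(L)$ with $p_i \in L$ and $\sigma_i \in \SymGrp_d$, and set $p \defeq \sum_i \mu_i p_i$, which lies in $L$ by convexity and in $\R^d_{\downarrow}$ because $L$ does. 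To verify $q \major p$, I use that each $s_k$ is a support function, hence sublinear, and $\SymGrp_d$-invariant, to obtain $s_k(q) \le \sum_i \mu_i s_k(\sigma_i p_i) = \sum_i \mu_i s_k(p_i) = \sum_i \mu_i (p_{i,1} + \cdots + p_{i,k}) = p_1 + \cdots + p_k = s_k(p)$, where the penultimate equality uses $p_i, p \in \R^d_{\downarrow}$. Equality at $k = d$ is automatic.

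For the shadow representation, I start from a spectrahedron of size $r$ that projects onto $L$ and encode the majorization condition from \eqref{eq:specpil} as follows. Introduce auxiliary variables $p \in \R^d$ constrained to lie in $L$ (contributing $r$) and impose the trace equality $\tr(A) = p_1 + \cdots + p_d$, which absorbs into the ambient affine space rather than producing a PSD block. For $k = 1$, the inequality $s_1(\lambda(A)) \le p_1$ becomes the size-$d$ constraint $p_1 I_d - A \succeq 0$. For each $k$ with $2 \le k \le d-1$, Lemma~\ref{lem:BTN} converts $s_k(\lambda(A)) \le p_1 + \cdots + p_k$ into the existence of auxiliary $Z_k \in \SymMat$ and $s_k \in \R$ satisfying $Z_k \succeq 0$, $Z_k - A + s_k I_d \succeq 0$, and $p_1 + \cdots + p_k - k s_k - \tr(Z_k) \ge 0$, contributing $2d+1$ to the total. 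Summing yields $r + d + (d-2)(2d+1) = r + 2d^2 - 2d - 2$, as claimed.

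The main conceptual work is the majorization identity in the first part; once one sees that sublinearity and $\SymGrp_d$-invariance of $s_k$ do all the heavy lifting, it reduces to a short calculation. The second part is essentially bookkeeping atop Lemma~\ref{lem:BTN}; the only subtle point is the convention that an affine equality is counted as part of the ambient affine space and not as a PSD block, without which the bookkeeping would be off by $2$.
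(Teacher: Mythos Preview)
Your proof is correct and follows essentially the same overall strategy as the paper: establish the majorization characterization \eqref{eq:specpil}, then read off the projected spectrahedral description via Lemma~\ref{lem:BTN}, with the same size count $r + d + (d-2)(2d+1) = r + 2d^2 - 2d - 2$.

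There is one genuine, if minor, difference worth noting in how the inclusion $\Pi(L) \subseteq \{q : \exists\, p \in L,\ q \major p\}$ is obtained. The paper first uses Lemma~\ref{lem:BTN} to see that the right-hand side $C$ of \eqref{eq:specpil} is convex, observes that both sides are then spectral convex sets, and reduces to the diagonal: since $\Diag(C)$ is a symmetric convex set containing $L$, it must contain $\Pi(L)$ by minimality. You instead give a direct construction: for $q = \sum_i \mu_i \sigma_i p_i$, set $p = \sum_i \mu_i p_i \in L$ and verify $q \major p$ using sublinearity and $\SymGrp_d$-invariance of $s_k$ together with $p_i, p \in \R^d_\downarrow$. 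Your argument is self-contained and does not rely on first establishing convexity of $C$; the paper's argument is slightly more conceptual but couples the two parts of the proof. Either way is fine.
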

\begin{proof}
    Let $C$ denote the right-hand side of~\eqref{eq:specpil}. We first show
    that $C$ is convex and is the projection of a spectrahedron of size
    $r+2d^2-2d-2$. Since $p\in L\subseteq \R_{\downarrow}^d$, we can write
    $s_k(p) = \sum_{i=1}^{k}p_i$, which is linear in $p$.  Then, using
    Lemma~\ref{lem:BTN}, the conditions $\tr(A) = \sum_i p_i$ and
    $s_k(\lambda(A)) \le \sum_{i=1}^k p_i$ for $1 \le k \le d-1$ define a
    convex set in $A$ and $p$.  Moreover, this set can be encoded by linear
    matrix inequalities involving matrices of size $(d-2)(2d+1) + d$, for a
    total size of $r + (d-2)(2d+1) + d  = r + 2d^2 - 2d - 2$.
	
    To check that $\Spec{\Pi(L)} = C$, since both sides are spectral convex
    sets, it is enough to check that their diagonal projections are equal.
    Since $\Pi(L)$ is symmetric, $\Diag(\Pi(L)) = \Pi(L)$. The diagonal
    projection $\Diag(C)$ is a symmetric convex set containing $L$, so
    $\Diag(C) \supseteq \Pi(L)$. For the reverse inclusion, if $A\in C$ then
    there exists $p\in L$ such that $\lambda(A) \major p$, but then $A\in
    \Spec{\Pi(p)}\subseteq \Spec{\Pi(L)}$. 
\end{proof}
We now specialize to the case of $\Spec{P}$ where $P$ is a symmetric
polyhedron with the origin in its interior. 
\begin{prop}
    Suppose that $P\subseteq \R^d$ is a symmetric polyhedron with $M$ orbits
    of facets that contains the origin in its interior.  Then $\Spec{P}$ is
    the projection of a spectrahedron of size $M + 2d^2 - 2d-2$. 
\end{prop}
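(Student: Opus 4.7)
The plan is to build a spectrahedral shadow for $\Spec{P}$ directly, rather than by invoking Theorem~\ref{thm:projspec} with $L = P \cap \R^d_{\downarrow}$; the latter route would force the inequality description of $L$ to include the $d-1$ monotonicity constraints $p_1 \ge p_2 \ge \cdots \ge p_d$, inflating the size by $d-1$. The point of the refinement is that no $p$-variables need to be introduced at all.

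I would begin by choosing for each of the $M$ orbits of facet normals the unique representative $a_i \in \R^d_{\downarrow}$, writing $P = \bigcap_{i=1}^{M} P_{a_i, b_i}$ with $b_i > 0$ (by the hypothesis $0 \in \interior(P)$). Since $\lambda(A)$ is sorted non-increasingly by convention, the rearrangement inequality gives $\max_{\sigma\in\SymGrp_d}\inner{\sigma a_i,\lambda(A)} = \inner{a_i,\lambda(A)}$, so $A \in \Spec{P}$ is equivalent to the $M$ inequalities $\inner{a_i, \lambda(A)} \le b_i$.

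Following the decomposition~\eqref{eqn:cI}, I would then write $a_i = \sum_{k=1}^{d-1}(a_{i,k}-a_{i,k+1})\1_{[k]} + a_{i,d}\1_{[d]}$, with coefficients $a_{i,k}-a_{i,k+1}\ge 0$, to recast each inequality as
\[
    \sum_{k=1}^{d-1}(a_{i,k}-a_{i,k+1})\,s_k(\lambda(A)) + a_{i,d}\,\tr(A) \ \le \ b_i \, .
\]
The key step is to introduce a single set of auxiliary variables $t_1,\dots,t_{d-1}$ shared across all $M$ inequalities, imposing $t_k \ge s_k(\lambda(A))$ via $t_1 I_d - A \succeq 0$ for $k=1$ (size $d$) and the construction of Lemma~\ref{lem:BTN} for $k=2,\dots,d-1$ (size $2d+1$ each), and then replacing $s_k(\lambda(A))$ by $t_k$ in each scalar inequality.

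Correctness is a routine check: given $A \in \Spec{P}$, the choice $t_k = s_k(\lambda(A))$ (with the associated $Z_k, s_k$ from Lemma~\ref{lem:BTN}) satisfies every LMI and recovers the original inequalities; conversely, any feasible $(A,t)$ satisfies $t_k \ge s_k(\lambda(A))$, and non-negativity of $a_{i,k}-a_{i,k+1}$ propagates the scalar inequalities back to $\inner{a_i,\lambda(A)} \le b_i$. Summing LMI sizes yields $d + (d-2)(2d+1) + M = M + 2d^2 - 2d - 2$. The main subtlety is simply the sharing of the $t_k$ across all $M$ orbits: this is what makes the $M$ and $2d^2 - 2d - 2$ contributions add rather than accumulate once per orbit.
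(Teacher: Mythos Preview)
Your argument is correct and takes a genuinely different route from the paper's. The paper proceeds by polarity: since $0 \in \interior(P)$, the polar $\polar{P}$ is a symmetric polytope with $M$ orbits of vertices, so $\polar{P} = \Pi(L)$ for $L = \conv\{v_1,\dots,v_M\} \subseteq \R^d_{\downarrow}$; Theorem~\ref{thm:projspec} then gives a projected spectrahedral representation of $\Spec{\polar{P}} = \polar{\Spec{P}}$ of size $M + 2d^2 - 2d - 2$, and the polarity-preservation result of~\cite{GPT13} transfers this back to $\Spec{P}$. You instead work directly with the facet description of $P$, decompose each sorted normal $a_i$ as in~\eqref{eqn:cI}, and share a single family of auxiliary variables $t_k \ge s_k(\lambda(A))$ across all $M$ scalar inequalities. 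Your route is more elementary in that it avoids the external polarity result, and---as you note in passing but do not use---it does not actually require $0 \in \interior(P)$; the paper's route, on the other hand, exhibits the proposition as an immediate corollary of Theorem~\ref{thm:projspec} and makes the vertex--facet duality behind the size count explicit.
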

\begin{proof}
    We will argue that $\Spec{\polar{P}} = \polar{\Spec{P}}$ is the projection
    of a spectrahedron of size $M+2d^2-2d-2$, and then appeal to the fact that
    if $C$ has a projected spectrahedral representation then $\polar{C}$ has a
    representation of the same size~\cite[Proposition 1]{GPT13}.  By our
    assumptions on $P$, we have that $\polar{(\polar{\Spec{P}})} = \Spec{P}$. 

    Since the origin is in the interior of $P$, we know that $\polar{P}$ is a
    symmetric polytope with $M$ orbits of vertices. Each orbit of
    vertices meets $\R^d_{\downarrow}$ and thus $\Spec{P} =
    \Spec{\Pi(\{v_1,\ldots,v_M\})}$ for some $v_1,\ldots,v_M\in
    \R^d_{\downarrow}$.  Let $L = \conv\,\{v_1,\ldots,v_M\}\subseteq
    \R^d_{\downarrow}$, and note that 
    \[
        L = \{ \mu_1v_1 + \cdots + \mu_M v_M\;:\; \mu_1,\dots,\mu_M \ge
        0,\;\mu_1 + \cdots + \mu_M= 1\}
    \] 
    gives a representation of $L$ as the projection of a polyhedron with $M$
    facets, and so a projected spectrahedral representation of size $M$.
    Finally, since $\Pi(L) = \Pi(\{v_1,\ldots,v_M\})$, it follows from
    Theorem~\ref{thm:projspec} applied to $\Spec{\Pi(L)}$ that
    $\polar{\Spec{P}} = \Spec{\polar{P}}$ is the projection of a spectrahedron
    of size $M+2d^2-2d-2$. 
\end{proof}

\section{Remarks, Questions, and future directions}\label{sec:misc}

\subsection*{Hyperbolicity cones and the generalized Lax conjecture}
\label{sec:hyp}
A multivariate polynomial $f \in \R[x_1,\dots, x_d]$, homogeneous of degree
$m$, is \Def{hyperbolic} with respect to  $e\in \R^d$ if $f(e) \neq 0$ and for
each $x\in \R^d$, the univariate polynomial $t\mapsto f_x(t) := f(x-te)$ has
only real roots. Associated with $(f,e)$ is a closed convex cone $C_{f,e}
\subseteq \R^d$, defined as the set of points $x \in \R^d$ for which all roots
of $f_x$ are non-negative. A major question in convex algebraic geometry,
known as the \emph{generalized (set-theoretic) Lax conjecture}
(see~\cite{Vinnikov}), asks whether every hyperbolicity cone is a
spectrahedron. 

If $C = \{x\in \R^d: \inner{\sigma a_i,x} \geq 0,\;\textup{for all $\sigma \in
\SymGrp_d$ and $i=1,2,\ldots, M$}\}$ is a symmetric polyhedral cone
containing $e=(1,1,\ldots,1)$ in its interior, then it is the hyperbolicity
cone associated with the degree $M\cdot d{!}$ symmetric polynomial  
\[ 
    f(x) = \prod_{i=1}^{M}\prod_{\sigma\in \SymGrp_d}\inner{\sigma a_i,x}.
\] 
The spectral polyhedral cone $\Spec{C}$ is the hyperbolicity cone associated
with the polynomial $F(X) = f(\lambda(X))$ and $e = I \in \SymMat$. This
follows from Proposition~\ref{prop:alg} and is a special case of an
observation of Bauschke, G\"uler, Lewis, and Sendov~\cite[Theorem
3.1]{BGLS01}. One can view Theorem~\ref{thm:spectrahedron} as providing
further evidence for the generalized Lax conjecture, since it shows that every
member of this family of hyperbolicity cones is, in fact, a spectrahedron.

Given a symmetric hyperbolic polynomial $f$, one natural way to produce a new
symmetric hyperbolic polynomial, and an associated symmetric hyperbolicity
cone, is to take the directional derivative $D_ef$ in the direction
$e=(1,1,\ldots,1)$, an example of a \emph{Renegar derivative}.  This operation
commutes with passing to the associated spectral objects. Indeed, taking the
Renegar derivative $D_ef$ and then constructing the spectral convex cone
$\Lambda(C_{D_ef,e})$ gives the same result as constructing the spectral
hyperbolic polynomial $F(X) = f(\lambda(X))$ and then taking the hyperbolicity
cone of $D_IF$, the Renegar derivative in the direction $I\in \SymMat$. For
example, the hyperbolicity cones associated with the elementary symmetric
polynomials are symmetric convex cones that arise by repeatedly taking Renegar
derivatives starting with $f(x) = x_1 x_2 \cdots x_d$ in the direction
$e=(1,1,\ldots,1)$.  Br\"and\'en~\cite{branden2014hyperbolicity} established
that these cones are all spectrahedral; see also~\cite{Sanyal13, PS15}.
Building on this result, Kummer~\cite{kummer2021spectral} has shown that the
associated spectral hyperbolicity cones are also spectrahedral.

\subsection*{Categories and Adjointness}
\newcommand\cbodies{\mathcal{K}}%
For a group $G$ acting on a real vector space $V$, let us write $\cbodies(V)^G$
for the class of $G$-invariant convex bodies $K \subset V$. We can interpret
the construction of spectral bodies as a map 
\[
    \Lambda : \cbodies^{\SymGrp_d}(\R^d) \ \to \ \cbodies^{O(d)}(\SymMat) \, .
\]
It follows from Lemma~\ref{lem:sec_project} that the map that takes $A \in
\SymMat$ to $\{ \sigma \lambda(A) : \sigma \in \SymGrp_d \}$ extends to a map
\begin{equation}\label{eqn:O-to-Sym}
    \lambda : \cbodies^{O(d)}(\SymMat) \ \to \ \cbodies^{\SymGrp_d}(\R^d)  
\end{equation}
such that $\lambda \circ \Lambda$ and $\Lambda \circ \lambda$ are the identity
maps. It would be very interesting to see if this can be phrased in
categorical terms that would explain the reminiscence of adjointness of
functors in Proposition~\ref{prop:support}.

\subsection*{Polar convex bodies}
In \cite{BGH13,BGH14} Biliotti, Ghigi, and Heinzner generalized the
construction of Schur-Horn orbitopes to other (real) semisimple Lie groups,
which they called \emph{polar orbitopes}. In particular, they showed that
polar orbitopes are facially exposed and faces are again polar orbitopes.
Kobert~\cite{kobert} gave explicit spectrahedral descriptions of polar
orbitopes involving the fundamental representations of the associated Lie
algebra. It would be interesting to generalize our spectrahedral
representations of spectral polyhedra to this setting.  A first step was taken
in~\cite{BGH16}, where~\eqref{eqn:O-to-Sym} was studied for polar
representations.

\subsection*{Spectral zonotopes}
For $z \in \R^d$, we denote the segment with endpoints $-z$ and $z$ by
$[-z,z]$.
A \Def{zonotope} is a polytope of the form
\[
    Z \ = \ [-z_1, z_1] + [-z_2, z_2] + \cdots + [-z_m, z_m] \, ,
\]
where $z_1,\dots,z_m \in \R^d$ and addition is Minkowski sum. Zonotopes are
important in convex geometry as well as in combinatorics; see, for example,
~\cite{crt,bolker,dcp}. For $z \in \R^d$, we obtain a symmetric zonotope 
\begin{equation}\label{eqn:zonotope}
    Z(z) \ := \ \sum_{\sigma \in \SymGrp_d} \sigma [-z,z]
\end{equation}
and for $z = e_1 - e_2 = (1,-1,0,\dots,0)$, the resulting symmetric zonotope
is $2 (d-2)! \Pi(d-1,d-3,\dots,-(d-3),-(d-1))$ and thus homothetic to the
standard permutahedron $\Pi(1,2,\dots,d)$. For $z = e_1$, we obtain a dilate
of the unit cube $[0,1]^d$.

We define \Def{spectral zonotopes} as convex bodies of the form
\[
    \Spec{Z(z_1)} + \cdots + \Spec{Z(z_m)} \, ,
\]
where $Z(z_i)$ are symmetric zonotopes. This class of convex bodies includes the
Schur-Horn orbitope $\SH((d-1,d-3,\dots,-(d-1)))$ as well as symmetric matrices 
with spectral norm at most one. 
It follows from Corollary~\ref{cor:Minkowski_sum} that spectral zonotopes are
spectral convex bodies and, in particular, spectral zonotopes form a
sub-semigroup (with respect to Minkowski sum) among spectral convex bodies. It
would be very interesting to explore the combinatorial, geometric, and
algebraic properties of spectral zonotopes.

There are a number of remarkable characterizations of zonotopes;
cf.~\cite{bolker}. In particular, zonotopes have a simple characterization in
terms of their support functions: The support function of a zonotope $Z$ as
in~\eqref{eqn:zonotope} is
given by $ \support{Z}{c} = \sum_{i=1}^m |\inner{z_i,c}|$. We obtain the
following characterization for spectral zonotopes.

\begin{cor}
    A convex body $K \subset \SymMat$ is a spectral zonotope if and only if its
    support function is of the form
    \[
        \support{K}{B} \ = \ \sum_{i=1}^m \sum_{\sigma \in \SymGrp_d}
        |\inner{\sigma z_i, \lambda(B)}| \, ,
    \]
    for some $z_1,\dots, z_m \in \R^d$.
\end{cor}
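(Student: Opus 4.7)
The plan is to reduce everything to an equality of support functions and then invoke the standard fact that a closed convex body is uniquely determined by its support function.

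For the forward direction, suppose $K = \Spec{Z(z_1)} + \cdots + \Spec{Z(z_m)}$ is a spectral zonotope. Support functions are additive under Minkowski sum, so $\support{K}{B} = \sum_{i=1}^m \support{\Spec{Z(z_i)}}{B}$. Proposition~\ref{prop:support} then rewrites each term as $\support{Z(z_i)}{\lambda(B)}$. Since $Z(z_i) = \sum_{\sigma \in \SymGrp_d} \sigma [-z_i, z_i]$ is itself a Minkowski sum of segments, and the support function of $[-z,z]$ at $c$ is $|\inner{z, c}|$, another application of additivity gives $\support{Z(z_i)}{c} = \sum_{\sigma \in \SymGrp_d} |\inner{\sigma z_i, c}|$. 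Substituting $c = \lambda(B)$ and summing over $i$ produces the advertised formula.

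For the converse, given a convex body $K \subset \SymMat$ whose support function has the stated form, I would define $K' := \Spec{Z(z_1)} + \cdots + \Spec{Z(z_m)}$, which is a spectral zonotope by definition. The computation in the previous paragraph yields $\support{K'}{B} = \support{K}{B}$ for every $B \in \SymMat$, and uniqueness of support functions forces $K = K'$.

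There is no serious obstacle here: the proof is a direct assembly of Proposition~\ref{prop:support}, Minkowski additivity of support functions, and the classical identity $\support{[-z,z]}{c} = |\inner{z, c}|$. The only minor point worth flagging is that the defining sum for $Z(z)$ in~\eqref{eqn:zonotope} runs over the full group $\SymGrp_d$ with multiplicity (rather than over a transversal of the stabilizer of $z$), which is why the final formula also carries this full sum without any correction.
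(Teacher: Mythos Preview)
Your proof is correct and matches the paper's intended argument, which is left implicit: the corollary is stated immediately after recalling the support-function characterization of zonotopes, so the reader is meant to combine that with Proposition~\ref{prop:support} and Minkowski additivity exactly as you do. The paper offers no additional ingredients beyond those you invoke.
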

The support function for $Z(e_1-e_2)$ is 
\[
	\support{Z(e_1-e_2)}{c} \ = \ 2(d-2)! \sum_{i < j} |c_i - c_j| \, .
\]
From Proposition~\ref{prop:support}, we infer that the support function of the
(standard) Schur-Horn orbitope is
\begin{equation}\label{eqn:sSH-supp}
    \support{\SH(d-1,\dots,-(d-1))}{B} \ = \ \sum_{i < j} 
	|\lambda(B)_i - \lambda(B)_j | \ = \ \|\mathcal{M}_B\|_* \,.
\end{equation}
Here $\|\cdot\|_*$ is the \Def{nuclear norm}, that is, the sum of the singular
values and, for fixed $B\in \SymMat$,  $\mathcal{M}_B$ is the linear map from
$d\times d$ skew-symmetric matrices to traceless $d\times d$ symmetric
matrices defined by $\mathcal{M}_B(X) = [B,X] = BX-XB$, which has non-zero
singular values $|\lambda(B)_i - \lambda(B)_j|$ for $1\leq i<j\leq d$.  The
$m_1\times m_2$ nuclear norm ball has a spectrahedral representation of size
$2^{\max\{m_1,m_2\}}$~\cite[Theorem 1.2]{SPW15}, and a projected spectrahedral
representation of size $m_1+m_2$. These observations show that
$\polar{\SH(d-1,\dots,-(d-1))} = \{B\;:\; \|\mathcal{M}_B\|_*\leq
1\}$ has a spectrahedral representation of size
$2^{\binom{d+1}{2}-1}$ and a projected spectrahedral representation of size
$d^2-1$.

A convex body $K \subset \R^d$ is a (generalized) \Def{zonoid} if it is the limit (in the
Hausdorff metric) of zonotopes, or, equivalently, if its support function is of
the form
\begin{equation}
	\label{eq:zonoid}
    \support{K}{c} \ = \ \int_{S^{d-1}} |\inner{c,u}| \, d\rho(u) \, ,
\end{equation}
for some (signed) even measure $\rho$; see~\cite[Ch.~3]{Schneider}. It was
hoped that spectral zonotopes are zonoids but this is not the case. Leif
Nauendorf~\cite{Leif} showed that the Schur-Horn orbitopes
$\SH(d-1,\dots,-(d-1))$ are never zonoids for $d \ge 3$.

A convex body $K\subset \R^d$ is a symmetric zonoid if and only if the measure $\rho$ in~\eqref{eq:zonoid} 
is symmetric. We define \Def{spectral zonoids} as those convex bodies with support functions of 
the form
\[ 
\support{K}{B} \ = \ \int_{S^{d-1}}|\inner{\lambda(B),u}|\, d\rho(u)\, ,\]
where $\rho$ is a symmetric even measure. Examples of spectral zonoids include 
the Schatten $p$-norm balls in $\SymMat$ when $p\geq 2$. Further examples of 
spectral zonoids can be found in \cite[Section 5.1]{aubrun2016zonoids} (in the Hermitian setting) 
and~\cite[Section 5]{burgisser2020probabilistic} (in the setting where the singular values of general matrices 
play the role of eigenvalues of symmetric matrices).

\enlargethispage{2cm}

\bibliographystyle{siam} \bibliography{bibliography.bib}

\end{document}